\newcommand{\Z}{\mbox{$\mathbb Z$}}
\newcommand{\N}{\mbox{$\mathbb N$}}     
\newtheorem{theo}{Theorem}[section]
\newtheorem{lem}[theo]{Lemma}
\newtheorem{prop}[theo]{Proposition}
\begin{document}

\title{The sum of digits of $n$ and $n^2$}

\author{Kevin G. Hare}
\address{Department of Pure Mathematics,  University of Waterloo, Waterloo, Ontario,  Canada,  N2L 3G1,}
\email{kghare@math.uwaterloo.ca}
\thanks{K.G. Hare was partially supported by NSERC}
\thanks{Computational support provided by CFI/OIT grant}

\author{Shanta Laishram}
\address{Department of  Mathematics,
Indian Institute of Science Education and Research, Bhopal, 462 023, India,}
\email{shanta@iiserbhopal.ac.in}

\author{Thomas Stoll}
\address{Institut de Math\'ematiques de Luminy, Universit\'e de la M\'editerran\'ee, 13288 Marseille Cedex 9, France,}
\email{stoll@iml.univ-mrs.fr}
\thanks{Th. Stoll was partially supported by an APART grant of the Austrian Academy of Sciences.}


\maketitle

\begin{abstract}
  Let $s_q(n)$ denote the sum of the digits in the $q$-ary expansion of an integer $n$.
  In 2005, Melfi examined the structure of $n$ such that $s_2(n) = s_2(n^2)$.
  We extend this study to the more general case of generic $q$ and polynomials $p(n)$,
  and obtain, in particular, a refinement of Melfi's result. We also give a more detailed analysis of the special case $p(n) = n^2$, looking at
  the subsets of $n$ where $s_q(n) = s_q(n^2) = k$ for fixed $k$.
\end{abstract}

\section{Introduction}

Let $q\geq 2$ and denote by $s_q(n)$ the sum of digits in the
$q$-ary representation of an integer $n$. Recently, considerable
progress has been made towards understanding the interplay
between the sum-of-digits of some algebraically defined sequences,
such as primes~\cite{MR09-1} and polynomials~\cite{DT06} or, in
particular, squares~\cite{MR09-2}. In the latter, C.~Mauduit and
J.~Rivat proved an asymptotic expansion of the sum of digits of
squares~\cite{MR09-2} in arithmetic progressions. Their proof
heavily relies on good estimates of quadratic Gauss sums. For the
case of general polynomials $p(n)$ of degree $h> 2$ there is still
a great lack of knowledge regarding their distribution with
respect to digitally defined functionals~\cite{DT06}.

Several authors studied the pointwise properties and relationships
of $s_q(p(n))$, e.g., K.~Stolarsky~\cite{St78},
B.~Lindstr\"om~\cite{Li97}, G.~Melfi~\cite{Me05}, and M.~Drmota
and J.~Rivat~\cite{DR05}. In particular, a conjecture of
Stolarsky~\cite{St78} about some extremal distribution properties
of the ratio $s_q(p(n))/s_q(n)$ has been recently settled by the
authors~\cite{HLS10}. Melfi~\cite{Me05} proposed to study the set
of $n$'s such that $s_2(n^2)=s_2(n)$, and he obtained that
\begin{equation}\label{melfi}
  \#\left\{n<N : \quad s_{2}(n^2)=s_2(n)\right\}\gg N^{1/40}.
\end{equation}
Using heuristic arguments, Melfi conjectured a much stronger
result that
\begin{equation}
\#\left\{n<N : \quad s_{2}(n^2)=s_2(n)\right\}
    \approx \frac{N^\beta}{\log N}
\end{equation}
with $\beta \approx 0.75488\dots$, giving an explicit formula for
$\beta$. The aim of the present paper is to provide a
generalization to general $p(n)$ and base $q$ of Melfi's result as
well as to use the method of proof to sharpen Melfi's exponent in~(\ref{melfi}).
Moreover, we provide a local analogon, i.e., getting a lower bound
for the number of $n$'s such that $s_q(n^2)=s_q(n)=k$ for some
fixed $k$.

\begin{theo}\label{mtheo3}
Let $p(x) \in \Z[x]$ have degree at least 2, and positive leading
coefficient. Then there exists an explicitly computable $\gamma>0$,
dependent only on $q$ and $p(x)$, such that
\begin{equation}\label{thm3estim}
  \#\left\{n<N,\;q\nmid n: \quad \vert s_q(p(n))-s_q(n)\vert \leq
\frac{q-1}{2}\;\right\}\gg N^\gamma,
\end{equation} where the implied constant depends only on $q$ and
$p(x)$.
\end{theo}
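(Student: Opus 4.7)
The plan is to adapt Melfi's ``seed and amplify'' strategy to general $p$ and $q$. Let $h=\deg p$. The first step is to locate a single integer $n_0$ with $q\nmid n_0$ and $|s_q(p(n_0))-s_q(n_0)|\leq (q-1)/2$. The slack $(q-1)/2$ (which for $q=2$ is strict equality but for $q\geq 3$ gives real room to manoeuvre) makes existence of such a seed cheap: one can produce it by direct search, by an averaging argument over a congruence class modulo $q^L$, or by an explicit algebraic construction tailored to $p$ (of the type used in \cite{Me05} for $p(x)=x^2$, $q=2$).

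Second, from the seed I would build a parametric family. Fix a spacing parameter $M$ (to be taken large in terms of $h$, $q$, and the $q$-length of $n_0$) and a set $\mathcal{A}\subset\{1,2,\ldots,\lfloor (\log_q N)/M\rfloor\}$ of admissible positions, and consider
\[
n \;=\; n_0 + \sum_{a\in\mathcal{A}} d_a\, q^{aM},
\]
where each $d_a$ is drawn from a fixed small alphabet $D$ of ``digit-blocks'' of $q$-length much less than $M$. With the gap $M$ chosen large enough, a multinomial expansion of $p(n)$ produces a sum of monomials $C_{\mathbf t}\, q^{M\sum t_a a}$ whose exponents are pairwise non-overlapping, so no carries occur. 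Consequently both
$s_q(n)=s_q(n_0)+\sum_{a\in\mathcal{A}} s_q(d_a)$
and $s_q(p(n))=\sum_{\mathbf t} s_q(C_{\mathbf t})$
decompose additively, and the latter becomes a sum of digit-sums of explicit polynomial combinations of the $d_a$ with the coefficients of $p$ and with $n_0$.

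Third, I would choose the alphabet $D$ and the seed $n_0$ so that, as the positions in $\mathcal{A}$ vary and the $d_a$'s range over $D$, the discrepancy $s_q(p(n))-s_q(n)$ stays in the window $[-(q-1)/2,(q-1)/2]$; that is, each insertion contributes the same amount to $s_q(n)$ and $s_q(p(n))$ modulo the permitted slack. Counting then gives $|D|^{|\mathcal{A}|}$ distinct $n<N$ and therefore the bound $\gg N^\gamma$ with the explicit exponent $\gamma=(\log|D|)/(M\log q)$, which is positive as soon as one can arrange $|D|\geq 2$.

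The main obstacle lies in the third step: keeping the two digit sums in balance as insertions accumulate. Generically, one insertion of $d_a$ adds $s_q(d_a)$ to $s_q(n)$ but roughly $h\cdot s_q(d_a)$ to $s_q(p(n))$, via the term $a_h(d_a q^{aM})^h$ and its cross products with $n_0$ and the other $d_b$'s. To defeat this factor-$h$ inflation, the alphabet $D$ must consist of integers with unusually small values of $s_q$ on prescribed polynomial combinations (powers, products with $n_0$, and multinomial coefficients of $p$); this is precisely the kind of exceptional digit behaviour exploited in \cite{Me05,HLS10}. Producing even a two-element $D$ with the required cancellation, and verifying that the no-carry gap $M$ can then be fixed once and for all, is where the bulk of the work sits; the final value of $\gamma$ is determined by the tradeoff between the size of $D$ achievable and the minimal permissible spacing $M$.
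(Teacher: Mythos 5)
Your proposal is a strategy outline rather than a proof, and the step you yourself flag as ``where the bulk of the work sits'' is precisely the theorem's entire content; it is also a step that, in the form you set it up, is unlikely to go through. If you insert $|\mathcal{A}|$ digit-blocks $d_aq^{aM}$ into a seed and force all monomials of the multinomial expansion of $p(n)$ into non-overlapping positions, then $s_q(p(n))$ picks up a positive contribution (at least $1$) from each nonzero coefficient $C_{\mathbf t}$, and the number of such cross terms grows superlinearly in $|\mathcal{A}|$ for $h\geq 2$ (or, if you force collisions by taking $\mathcal{A}$ to be an arithmetic progression, the collided coefficients grow and their digit sums still inflate). Meanwhile $s_q(n)$ grows only linearly in $|\mathcal{A}|$. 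So the discrepancy $s_q(p(n))-s_q(n)$ cannot be held inside a window of fixed width $(q-1)/2$ uniformly over exponentially many choices of the $d_a$ merely by a clever choice of a finite alphabet $D$; no candidate for $D$ is exhibited, and the existence of the seed $n_0$ itself is asserted, not proved. As written, nothing in the proposal actually produces a single $n$ satisfying the inequality, let alone $N^\gamma$ of them.

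The paper's mechanism is structurally different and is worth internalizing. It takes the single family $t_m(x)=mx^4+mx^3-x^2+mx+m$ with $m=q^l-r$, $1\leq r\leq\lfloor q^{\alpha l}\rfloor$, evaluated at $x=q^k$. The negative coefficient $-x^2$ makes $s_q(t_m(q^k))=(q-1)k+O(l)$ grow in \emph{exact} increments of $q-1$ as $k$ increases, while $t_m(x)^h$ has all positive coefficients, so $s_q(t_m(q^k)^h)$ is independent of $k$ once $k$ is large; writing those coefficients as polynomials in $q^l$ with alternating signs gives the lower bound $s_q(t_m(q^k)^h)\geq 4h(q-1)(l(1-\alpha h)-h)$. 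One then simply dials $k$ until the two digit sums agree to within $(q-1)/2$ --- this ``tunable knob'' is what replaces your requirement that every insertion keep the two sums in lockstep --- and the exponent $\gamma$ comes not from an alphabet but from the $\gg q^{\alpha l}$ admissible values of $r$, each giving a distinct $n<N$. General $p$ is reduced to this picture by a shift $x\mapsto q^{s_1}x+q^{s_2}+1$ making the coefficients positive, with separate bookkeeping for $h=2,3$. I would encourage you to rework your attempt around a one-parameter balancing device of this kind rather than around simultaneous cancellation across many inserted blocks.
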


This result is given in Section \ref{sec:mtheo3}. In the general case of
$q$-ary digits and polynomials $p(x)$, the
bound $(q-1)/2$ in~(\ref{thm3estim}) cannot be improved. This is
easily seen by recalling the well-known fact
\begin{equation}\label{wellknown}
  s_q(n)\equiv n \bmod (q-1).
\end{equation}
Indeed, if we set $p(x)=(q-1)x^2+x+a$ for $a\in\N$ then we find
that
$$ s_q(p(n))-s_q(n)  \equiv p(n)-n \equiv a \bmod
(q-1)$$
which could be any of $0,1,\ldots,q-2$ depending only on
the choice of $a$.

The method of proof of Theorem~(\ref{mtheo3}) allows to improve
on Melfi's result~(\ref{melfi}).

\begin{theo}\label{mtheo3+}
\begin{equation}\label{melfi+}
  \#\left\{n<N : \quad s_{2}(n^2)=s_2(n)\right\}\gg N^{1/19}.
\end{equation}
\end{theo}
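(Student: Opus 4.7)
The plan is to apply the construction underlying Theorem~\ref{mtheo3} in the specific case $q=2$, $p(x)=x^2$, and then optimize its parameters to extract the explicit exponent $1/19$.

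The reduction is immediate: Theorem~\ref{mtheo3} applied with $q=2$ and $p(x)=x^2$ yields $\gg N^{\gamma}$ odd integers $n<N$ satisfying $|s_2(n^2)-s_2(n)|\leq(q-1)/2=1/2$. Since $s_2(n^2)-s_2(n)$ is an integer, this is equivalent to $s_2(n^2)=s_2(n)$, and dropping the restriction $2\nmid n$ only enlarges the counting set. Hence it suffices to show that $\gamma$ can be taken equal to $1/19$ in this specialization.

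The second step is to trace through the construction of Theorem~\ref{mtheo3} with explicit bookkeeping. That construction presumably builds candidates $n$ by placing fixed small digit-patterns (``seeds'') $c_\alpha$ at carefully spaced positions $k_\alpha$, so that $n=\sum_\alpha c_\alpha 2^{k_\alpha}$ and the expansion $n^2=\sum_{\alpha,\beta}c_\alpha c_\beta 2^{k_\alpha+k_\beta}$ admits a clean carry analysis at every bit position. The number of admissible tuples $(k_\alpha)$ then behaves like $\gg N^{\gamma_0}$ with $\gamma_0$ determined by the ratio of the number of free placement parameters per ``block'' to the bit-length of a block. For the generic polynomial treated in Theorem~\ref{mtheo3} one must use rather long blocks in order to absorb carries coming from the mixed monomials of $p(x)$; for the concrete case $p(x)=x^2$ in base $2$ the blocks can be shortened significantly, which is the source of the improved exponent.

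The main obstacle is the explicit search and the accompanying carry verification: one must exhibit the particular seed configuration that realizes the exponent $1/19$ and then verify, via a digit-by-digit case analysis, that both $s_2(n)$ and $s_2(n^2)$ stay equal under every admissible placement of the free parameters $k_\alpha$. I expect $1/19$ to emerge as a ratio of the form $(\log_2 r)/L$ for the optimal block-length $L$ and number $r$ of locally compatible choices identified by a bounded computer search. Once this optimal template has been pinned down and the carry analysis certified, the lower bound follows by counting admissible tuples $(k_\alpha)$ with $\max_\alpha k_\alpha\lesssim\log_2 N$, producing $\gg N^{1/19}$ distinct values of $n<N$ with $s_2(n^2)=s_2(n)$.
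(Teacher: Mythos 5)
Your opening reduction is correct and matches the paper: since $(q-1)/2=1/2$ and $s_2(n^2)-s_2(n)\in\Z$, the inequality $|s_2(n^2)-s_2(n)|\leq 1/2$ forces equality, so it suffices to run the construction of Theorem~\ref{mtheo3} for $q=2$, $p(x)=x^2$ with a sharpened exponent. But from that point on the proposal is a plan rather than a proof, and the plan misdescribes the mechanism. The paper does not search for ``seed'' digit templates by a bounded computer search and then count placements $k_\alpha$; it takes the single explicit family $n=t_m(2^k)$ with $t_m(x)=mx^4+mx^3-x^2+mx+m$ and $m=2^l-r$, where $r$ ranges over $1\leq r\leq\lfloor 2^{\alpha l}\rfloor$. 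The $\gg N^{1/19}$ count comes from the $\gg 2^{\alpha l}$ admissible values of $r$ (for each of which a suitable $k$ exists), not from the number of free block positions. Everything that actually produces the exponent is absent from your write-up: the observation that the largest coefficient of $t_m(x)^2$ is $4m^2+1$, giving noninterference once $k\geq 2l+2$; the lower bound $s_2(t_m(2^k)^2)\geq 4s_2(m^2)+s_2(4m^2+1)\geq 5\bigl((l-1)-s_2(r-1)\bigr)\geq(2+\varepsilon)l$ via the splitting formulae of Proposition~\ref{propsplit}; the resulting window $2l+2\leq k\leq(2+\varepsilon)l$ in which $k$ can be chosen so that the two digit sums agree; and the final size estimate $t_m(2^k)\leq 2^{(9+5\varepsilon)l}=N$, which yields $\gg N^{\alpha/(9+5\varepsilon)}\geq N^{1/19}$ solutions for $\alpha$ close to $1/2$ and $\varepsilon$ small.

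You explicitly defer ``the explicit search and the accompanying carry verification'' and the identification of the template realizing $1/19$; that deferred content is the entire proof. Moreover, your predicted form of the exponent, $(\log_2 r)/L$ for an optimal template found by computer, does not correspond to anything in the actual argument, so even as a roadmap the proposal would not lead a reader to the bound $1/19$ without independently rediscovering the $t_m$ construction and its exact coefficient analysis.
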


Following on Melfi's paper~\cite{Me05}, we examine the case when $p(n) =
n^2$ and $q=2$ in more detail. We consider the set of all $n$'s
such that $s_2(n) = s_2(n^2)$, and partition the set into the
subsets dependent upon the value of $s_2(n)$. By noticing that
$s_2(n) = s_2(2 n)$ and $s_2(n^2) = s_2((2n)^2)$ we see that we
can restrict our attention to odd $n$.
\begin{theo}\label{mtheo4}
  Let $k\leq 8$. Then
   $$\{n<N,\; n \mbox{ odd}: \quad s_2(n^2)=s_2(n)=k\}$$
   is a finite set.
\end{theo}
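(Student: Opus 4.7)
My plan is to write the odd number $n$ in binary as $n = 2^{a_1} + 2^{a_2} + \cdots + 2^{a_k}$ with $a_1 > a_2 > \cdots > a_k = 0$ and to expand
\[
n^2 = \sum_{i=1}^k 2^{2 a_i} + \sum_{1 \le i < j \le k} 2^{a_i + a_j + 1}
\]
as a formal sum of $T := k(k+1)/2$ powers of $2$. If all these exponents were distinct, $s_2(n^2)$ would equal $T$; since $T > k$ for $k \ge 2$, the assumption $s_2(n^2) = k$ forces exactly $T - k = k(k-1)/2$ ``reductions'' in the column-by-column addition, produced by coincidences of exponents in the multiset together with the carry cascades they trigger.

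The first step would be a top-gap lemma: if $a_1 \ge 2 a_2 + 2$, then the $k$ top exponents $2 a_1$ and $a_1 + a_j + 1$ (for $2 \le j \le k$) are pairwise distinct and strictly larger than every bottom exponent, each of which is at most $2 a_2$; hence the top and bottom parts of $n^2$ live in disjoint position ranges. The top then contributes exactly $k$ to the digit sum, while the bottom equals $(n - 2^{a_1})^2 \ge 1$ and contributes at least one more, giving $s_2(n^2) \ge k + 1$, a contradiction. Thus $a_1 \le 2 a_2 + 1$.

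The second step is to iterate analogous splittings $n = H_i + L_i$ with $H_i = 2^{a_1} + \cdots + 2^{a_i}$ and $L_i = 2^{a_{i+1}} + \cdots + 2^{a_k}$, and to enumerate, for each fixed $k \le 8$, the finitely many coincidence/cascade patterns consistent with producing exactly $k(k-1)/2$ reductions. Each pattern imposes a small system of linear Diophantine equations on the $a_i$ (of the form $a_i + a_j = a_l + a_m$, $2 a_l = a_i + a_j + 1$, and so on) together with compatibility conditions describing the carry chain. Combined with the top-gap bound, each such system has only finitely many integer solutions, which yields an explicit constant $B(k)$ with $a_1 \le B(k)$. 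A finite computer enumeration over $n < 2^{B(k)+1}$ then confirms the full (finite) set of solutions.

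The main obstacle is the combinatorial bookkeeping in the second step. For $k = 8$ one needs as many as $28$ reductions, and a single initial coincidence can trigger a long chain of secondary coincidences as merged powers $2^e + 2^e = 2^{e+1}$ propagate upward and meet already-present terms. Enumerating all admissible cascade shapes and verifying that each forces $(a_1, \ldots, a_{k-1})$ into a bounded set is the technically delicate part of the argument, and it is precisely this growth of cases with $k$ that confines the theorem to $k \le 8$.
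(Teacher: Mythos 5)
Your plan is essentially the paper's own proof: the paper writes $n=\sum_i 2^{r_i}$, expands $n^2$ into the same multiset of $k(k+1)/2$ exponents, and carries out exactly this case analysis on orderings, coincidences and carry cascades (your top-gap lemma is one instance of the inequality deductions its algorithm performs), resolving everything by computer for $k\le 8$. As in the paper, the claim that every cascade pattern confines the exponents to a bounded set is not provable a priori --- the same machinery detects genuine infinite families for $k=12$ --- so the substance of the proof is the reported computation, which your sketch correctly identifies as the delicate part.
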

This was done by explicit computation of all such $n$ which are
given in Tables \ref{tab:k=1..7} and \ref{tab:k=8}. A discussion
of how these computations were made is given in Section
\ref{sec:mtheo4}.

Based on these initial small values of $k$, one might expect that this is always
true. Let
\begin{equation}\label{casek12}
  n_{(2)} = 1101111 \underbrace{00\dots 00}_{r} 1101111
\end{equation}
be written in base $2$. Then $s_2(n) = s_2(n^2) = 12$ for all $r \geq 8$.
This is in fact a special case of a more general property.
\begin{theo}\label{mtheo5}
  Let $k\geq 16$ or $k \in \{12, 13\}.$ Then
   $$\{n<N,\; n \mbox{ odd}: \quad s_2(n^2)=s_2(n)=k\}$$
   is an infinite set.
\end{theo}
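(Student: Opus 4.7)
The plan is to construct, for each $k$ in the stated range, an explicit infinite family of odd $n$ with $s_2(n)=s_2(n^2)=k$. The family will be obtained by writing $n$ as a sum of integer \emph{blocks} placed at well-separated binary positions, so that squaring produces no carries across blocks and the digit sums of $n$ and $n^2$ can be read off combinatorially. Concretely, I take
\[
n \;=\; \sum_{i=1}^{t} m_i \, 2^{a_i}, \qquad 0 = a_1 < a_2 < \cdots < a_t,
\]
with each $m_i$ a fixed positive integer of bounded binary width and $m_1$ odd (so that $n$ is odd). Expanding,
\[
n^2 \;=\; \sum_{i=1}^{t} m_i^2\, 2^{2a_i} \;+\; \sum_{1\le i<j\le t} 2\,m_i m_j\, 2^{a_i+a_j}.
\]
If the gaps $a_{i+1}-a_i$ are chosen large enough relative to the widths of the products $m_i m_j$, then the pieces $m_i^2\,2^{2a_i}$ and $2\,m_i m_j\,2^{a_i+a_j}$ occupy pairwise disjoint intervals of bit positions, and using $s_2(2x)=s_2(x)$ we get
\[
s_2(n) \;=\; \sum_i s_2(m_i), \qquad s_2(n^2) \;=\; \sum_i s_2(m_i^2) + \sum_{i<j} s_2(m_i m_j).
\]
Once a tuple $(m_1,\dots,m_t)$ is fixed that satisfies the balance equation $\sum_i s_2(m_i) = \sum_i s_2(m_i^2) + \sum_{i<j} s_2(m_i m_j) = k$, letting any one gap $a_{i+1}-a_i$ range over all sufficiently large integers yields an infinite family of valid $n$.

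The proof thus reduces to exhibiting, for each admissible $k$, such a block tuple. For $k=12$ the tuple $(m_1,m_2)=(1101111_2,\,1101111_2)$ from \eqref{casek12} works: $s_2(m_i)=6$ and $s_2(m_i^2)=s_2(m_1m_2)=4$, giving $12=4+4+4$. For $k=13$ I would look for a two- or three-block variant with odd total digit sum, the natural candidate being to replace one copy of $1101111_2$ by a close variant (for example, a block obtained by adding a single far-away bit or by perturbing the pattern) whose cross-product with the remaining block still has very few $1$-bits. For $k\ge 16$ the cleanest route is to exhibit a finite list of base tuples, one per residue class modulo some period $c$, together with an \emph{augmentation} block $M$ such that inserting a copy of $M$ at a sufficiently high position into any existing admissible tuple increases both $s_2(n)$ and $s_2(n^2)$ by exactly $c$; an induction on $k$ in each residue class then covers all $k\ge 16$. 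Throughout, one also has to verify that the resulting $n$'s are pairwise distinct, which follows automatically since varying the gap changes the top bit position.

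The main obstacle is the combinatorial search for the right blocks: the balance identity is tight and very sensitive to the cross-product digit sums $s_2(m_im_j)$, which must be just small enough to compensate for the deficit $\sum_i\bigl(s_2(m_i)-s_2(m_i^2)\bigr)$. Verifying that one truly can cover every $k\in\{13\}\cup\{k\ge 16\}$ amounts to a finite, but non-trivial, set of explicit base cases and the construction of a uniform augmentation gadget; the no-overlap verification, by contrast, is a routine arithmetic inequality on the positions $a_i$. The heart of the argument is therefore the explicit exhibition of block tuples rather than any analytic estimate.
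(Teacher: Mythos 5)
Your framework is exactly the one the paper uses (its Proposition~\ref{prop:infinite} is your $t=2$ block decomposition), and your $k=12$ case is correct and complete. But for $k=13$ and for all $k\ge 16$ — which is the entire content of the theorem beyond one value — you have only described a search strategy, not carried it out. ``I would look for a two- or three-block variant'' and ``the cleanest route is to exhibit a finite list of base tuples \dots together with an augmentation block'' are statements of intent; no blocks are exhibited, and you yourself concede that ``the heart of the argument is therefore the explicit exhibition of block tuples.'' That exhibition is precisely what is missing, so the proposal does not prove the theorem.

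Moreover, the augmentation gadget you sketch for $k\ge 16$ has a structural problem you do not address: inserting a new block $M$ into a $t$-block tuple adds $s_2(M)$ to $s_2(n)$ but adds $s_2(M^2)+\sum_{i=1}^{t}s_2(Mm_i)$ to $s_2(n^2)$, and each cross term is at least $1$. So the deficit $s_2(M)-s_2(M^2)$ that $M$ must supply grows with the number of blocks already present, and a single uniform gadget $M$ increasing both sides ``by exactly $c$'' independently of the host tuple cannot exist; the induction as stated would not close. The paper avoids this by staying with two blocks but making them \emph{parametric}: for $u$ with $(u)_2=1^{(k_1)}01^{(n_1)}$ and $n_1\ge k_1+2$ one has $s_2(u^2)=n_1$ \emph{independently of} $k_1$, while $s_2(u)=k_1+n_1$; together with an explicit three-case formula for $s_2(uv)$ (Lemma~\ref{n1n2}) this gives enough free parameters to realize every residue class modulo $3$ for all large $k$, with the finitely many leftover values $13,17,21$ handled by explicit pairs $(u,v)$. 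If you want to complete your argument along your own lines, you must either produce the finite list of base tuples plus a gadget whose cross terms are controlled (which is the hard combinatorial work), or switch to a parametric two-block family of the paper's type.
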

The proof of this result is given in Section \ref{sec:mtheo5}.
Despite of great effort we are not able to decide the finiteness
problem in the remaining cases $k\in\{9,10,11,14,15\}$. However,
we will comment on some heuristic evidence that it seems unlikely
that there are infinitely many solutions in the cases $k=9$ and
$k=10$, respectively, in Section \ref{sec:comp2}.

Somewhat surprisingly, a similar answer can be given if $q
\geq 3$.

\begin{theo}\label{genqsquares}
  Let $q\geq 3$ and assume
  $$k\geq 94(q-1).$$
  Then the equation
  \begin{equation}\label{Dioeq}
    s_q(n^2) = s_q(n) = k
  \end{equation}
  has infinitely many solutions in $n$ with $q\nmid n$ if and only if
  \begin{equation}\label{knec}
    k (k-1) \equiv 0\quad   \bmod (q-1).
  \end{equation}
\end{theo}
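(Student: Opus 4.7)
\noindent\emph{Proof sketch.}
Necessity of \eqref{knec} is immediate from the standard congruence \eqref{wellknown}: if $s_q(n) = s_q(n^2) = k$, then reducing modulo $q-1$ gives $k \equiv n$ and $k \equiv n^2 \equiv k^2 \pmod{q-1}$, so $k(k-1) \equiv 0 \pmod{q-1}$.

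For sufficiency, my plan is to use the same block-concatenation framework that underlies the proof of Theorem~\ref{mtheo5}. One looks for $n$ of the shape
\[ n = A_1 + A_2\, q^{R_2} + \cdots + A_s\, q^{R_s}, \qquad 0 < R_2 < \cdots < R_s, \]
where $(A_1, \dots, A_s)$ is a fixed tuple of ``atomic'' blocks (independent of the $R_i$) and the gaps $R_{i+1}-R_i$ are taken so large that no carries propagate between blocks in either $n$ or $n^2$. Under this separation the digit sums decompose as
\[ s_q(n) = \sum_{i=1}^s s_q(A_i), \qquad s_q(n^2) = \sum_{i=1}^s s_q(A_i^2) + \sum_{i<j} s_q(2 A_i A_j). \]
Once a single admissible tuple realising both sums equal to $k$ is found, letting $R_2, \dots, R_s$ vary produces infinitely many $n$; the condition $q \nmid n$ is enforced by requiring $A_1 \not\equiv 0 \pmod q$.

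The remaining task is combinatorial: for every admissible $k \ge 94(q-1)$ one must exhibit such a tuple. My plan is to assemble a library of atomic blocks with controlled triples $(s_q(A), s_q(A^2), s_q(2A^2))$---the single-digit blocks $A = 1, \dots, q-1$, the ``solid'' blocks $A = q^j - 1$ (for which $s_q(A) = s_q(A^2) = j(q-1)$), and at least one \emph{carry-heavy} block $A$ with $s_q(A^2) < s_q(A)$. Such a strict drop is essential because the cross-term surplus $\sum_{i<j} s_q(2 A_i A_j)$ is strictly positive and must be compensated somewhere. By the Chinese Remainder Theorem the admissible residues of $k$ modulo $q-1$ form a set of size $2^{\omega(q-1)}$ (with $\omega$ the number of distinct prime factors), and a separate template is needed for each; the explicit constant $94(q-1)$ is expected to emerge from bounding the minimal $k$ each template can reach.

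The main obstacle is the construction of carry-heavy blocks uniformly in $q$ and the bookkeeping for the cross-terms. In base $q = 2$, Melfi's block $1101111_2$ provides the required strict drop; for $q$ large a digit-by-digit analogue is easy to write down. The situation is more delicate for small $q$---in particular $q = 3$, where examples with $s_q(A^2) < s_q(A)$ are scarce among small integers---and one may need to use substantially longer blocks or to permit controlled interactions between neighbouring blocks. Combined with the case analysis by residue class, this is where the bulk of the technical work will lie; the combinatorial verification that the resulting templates realise every admissible $k \ge 94(q-1)$ is the delicate final step.
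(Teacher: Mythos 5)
Your necessity argument is exactly the paper's and is complete. For sufficiency you have correctly identified the framework the paper uses --- concatenate a fixed tuple of blocks with gaps large enough that $s_q(n)=\sum_i s_q(A_i)$ and $s_q(n^2)=\sum_i s_q(A_i^2)+\sum_{i<j}s_q(2A_iA_j)$, and observe that at least one block must be ``deficient'' to absorb the positive cross-terms. But everything after that is a plan rather than a proof: the entire technical content of the theorem is the explicit construction of the deficient blocks uniformly in $q$, the exact evaluation of the cross-term, and the verification that the resulting family of attainable $k$ covers every admissible residue class from $94(q-1)$ onward. None of that is supplied, and your own closing paragraph concedes that this is ``where the bulk of the technical work will lie.'' In particular, your concern that small $q$ (e.g.\ $q=3$) may require ad hoc longer blocks signals that you do not yet have the uniform construction in hand.

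For comparison, the paper needs only \emph{two} blocks, namely
$u=((q-1)^{k_1}\,0\,(q-1)^{n_1})_q$ and $v=((q-1)^{k_2}\,0\,(q-1)^{n_2}\,e)_q$ with a single extra digit $e\in\{1,\dots,q-1\}$ appended to select the residue class of $k$ modulo $q-1$. A short lemma shows these ``solid block with one hole'' numbers satisfy $s_q(u)=(q-1)(n+k)+e$ but $s_q(u^2)=(q-1)(n+1)+f(q,e)$, so the deficit $s_q(u)-s_q(u^2)$ grows linearly in $k$ --- this is the uniform-in-$q$ carry-heavy block you were missing, and it works for $q=3$ exactly as for large $q$. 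The cross-term $s_q(2uv)$ is then computed exactly under the parameter choice $k_1=n_2$, $n_1=2k_2-\alpha$, and matching $s_q(n)=s_q(n^2)$ forces $\alpha(q-1)=f(q,e)+g(q,e)-e+2$; the integrality of $\alpha$ is precisely where the hypothesis $e(e-1)\equiv k(k-1)\equiv 0 \pmod{q-1}$ enters (so no separate Chinese-Remainder case analysis over the $2^{\omega(q-1)}$ residues is needed), and the bound $0\le\alpha\le 15$ together with the constraint $k_2\ge 17$ is what produces the constant $94(q-1)$. Until you exhibit concrete blocks with these three computations carried out, the sufficiency direction remains open.
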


We show this result in Section~\ref{sec:genqsquares}.

\section{Proof of Theorems~\ref{mtheo3} and~\ref{mtheo3+}}\label{sec:mtheo3}

Following Lindstr\"om~\cite{Li97} we say that terms are
\textit{noninterfering} if we can use the following splitting
formul\ae:
\begin{prop}\label{propsplit}
For $1\leq b<q^k$ and $a,k\geq 1$,
  \begin{align}
    s_q(aq^k+b)&=s_q(a)+s_q(b),\label{splitpos}\\
    s_q(aq^k-b)&=s_q(a-1)+(q-1)k-s_q(b-1).\label{splitneg}
  \end{align}
\end{prop}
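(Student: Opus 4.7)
The plan is to prove the two identities in turn: establish (\ref{splitpos}) directly from the uniqueness of the base-$q$ expansion, and then deduce (\ref{splitneg}) from (\ref{splitpos}) together with a short complement computation.

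For (\ref{splitpos}), I would write $a=\sum_{i\ge 0}a_i q^i$ and $b=\sum_{j=0}^{k-1}b_j q^j$ with digits $a_i,b_j\in\{0,1,\ldots,q-1\}$. The hypothesis $b<q^k$ is exactly what is needed to ensure that $b$ has no nonzero digit in any position $\ge k$, so
\[
aq^k+b=\sum_{j=0}^{k-1}b_j q^j+\sum_{i\ge 0}a_i q^{i+k}
\]
is already the base-$q$ expansion of $aq^k+b$: every coefficient lies in $\{0,\ldots,q-1\}$ and no carries occur because the two supports $\{0,\ldots,k-1\}$ and $\{k,k+1,\ldots\}$ are disjoint. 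Summing the digits gives $s_q(aq^k+b)=s_q(b)+s_q(a)$.

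For (\ref{splitneg}), the first step is the algebraic rewriting $aq^k-b=(a-1)q^k+(q^k-b)$, which is legitimate because $a\ge 1$, and the hypothesis $1\le b<q^k$ guarantees $1\le q^k-b<q^k$. I then apply (\ref{splitpos}) to the right-hand side to obtain
\[
s_q(aq^k-b)=s_q(a-1)+s_q(q^k-b),
\]
with the trivial convention $s_q(0)=0$ covering the case $a=1$. It remains to show $s_q(q^k-b)=(q-1)k-s_q(b-1)$. Setting $m=b-1$ so that $0\le m\le q^k-1$, I note that $q^k-1$ has base-$q$ expansion consisting of exactly $k$ digits, each equal to $q-1$; writing $m=\sum_{j=0}^{k-1}m_j q^j$ (with leading zeros allowed), the digit-wise subtraction $(q^k-1)-m=\sum_{j=0}^{k-1}(q-1-m_j)q^j$ involves no borrows since each $m_j\le q-1$, so
\[
s_q(q^k-b)=s_q((q^k-1)-m)=\sum_{j=0}^{k-1}(q-1-m_j)=k(q-1)-s_q(b-1),
\]
which combined with the previous display yields (\ref{splitneg}).

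Honestly there is no serious obstacle here — both identities reduce to the uniqueness of base-$q$ representations plus the absence of carries in two carefully chosen decompositions. The only points that require a little care are the range conditions: checking that $1\le q^k-b<q^k$ so that (\ref{splitpos}) is actually applicable in the derivation of (\ref{splitneg}), and handling the degenerate case $a=1$ (where $a-1=0$) by interpreting $s_q(0)=0$. Both are immediate from the stated hypotheses $1\le b<q^k$ and $a\ge 1$.
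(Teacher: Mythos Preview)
Your argument is correct: both identities follow exactly as you describe from the uniqueness of base-$q$ expansions, and you have handled the edge cases ($a=1$, the range $1\le q^k-b<q^k$) properly.

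Note, however, that the paper does not actually prove this proposition at all --- it simply refers the reader to~\cite{HLS10}. What you have supplied is therefore not a rederivation of the paper's proof but a self-contained replacement for the citation. This is arguably an improvement in exposition, since the identities are elementary enough that importing them from another paper is unnecessary; your complement trick $q^k-b=(q^k-1)-(b-1)$ makes (\ref{splitneg}) transparent.
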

\begin{proof} See~\cite{HLS10}.
\end{proof}

\textit{Proof of Theorem~\ref{mtheo3}:} The proof uses a construction of a
sequence with noninterfering terms which has already been used
in~\cite{HLS10}. However, to obtain the bound $N^\gamma$ in~(\ref{thm3estim}) instead of
a logarithmic bound, we have to make a delicate refinement. To begin with, define the polynomial
$$t_m(x)=m x^4+mx^3-x^2+mx+m$$
where $m\in \Z$. Set $m=q^l-r$ with $1\leq r\leq \lfloor q^{\alpha l}\rfloor$, $q \nmid r$ and
$0<\alpha<1$. Obviously, for $\alpha<1$ there exists $l_0(\alpha)$ such that for all $l>l_0(\alpha)$ we have $m\geq 3$.
Furthermore let $k$ be such that $q^k>m$. By consecutively
employing~(\ref{splitpos}) and~(\ref{splitneg}) we see that
\begin{align}
  s_q(t_m(q^k))&=(q-1)k+s_q(m-1)+3s_q (m)\nonumber\\
  &=(q-1)k+s_q (q^l-(r+1))+3 s_q(q^l-r)\nonumber\\
  &=(q-1)k+(q-1)l -s_q(r) +3((q-1)l -s_q(r-1))\label{crx}\\
  &\leq (q-1)k+4(q-1)l.\nonumber
\end{align}
First consider the easier case of monomials $p(n)=n^h$, $h\geq 2$  where we can give a somewhat more direct proof. We have
\begin{align}
  t_m(x)^h&=(mx^4+mx^3-x^2+mx+m)^h\nonumber\\
  &=\sum_{j=0}^{4h} c_{j,h}(m) x^j\label{cdefin}\\
  &=m^h x^{4h}+hm^h x^{4h-1}+\left(\binom{h}{2} m^h-h m^{h-1}\right) x^{4h-2}\nonumber\\
  &\qquad +\left(\left(h+\binom{h}{3}\right)m^h-2\binom{h}{2}m^{h-1}\right) x^{4h-3}+\mbox{smaller powers.}\nonumber
\end{align}
From~\cite{HLS10} we have that $t_m(x)^h$ has only positive
coefficients, which are bounded by $(2mh)^h$. This means that $s_q(t_m(q^k)^h)$ does not depend on $k$
if $k$ is sufficiently large (see~(\ref{splitpos})). More precisely, if $q^k>(2mh)^h$ (note that a sufficient condition for this is $k\geq (h+1)l$)
then we get for sufficiently large $l$
and a symmetry argument for the coefficients of $t_m(x)^h$,
\begin{align}
  s_q(t_m(q^k)^h)&\geq 2\left(s_q(m^h) +s_q(hm^h) +s_q\left(\binom{h}{2} m^h-h m^{h-1}\right)\right.\nonumber\\
  &\left.\qquad\qquad +s_q\left(\left(h+\binom{h}{3}\right)m^h-2\binom{h}{2}m^{h-1}\right)\right).\label{lowbound}
\end{align}
Consider the first summand $s_q(m^h)$ in~(\ref{lowbound}). We have
\begin{eqnarray}
  m^h & = & (q^l-r)^h = \sum_{j=0}^h \binom{h}{j} (-1)^{h-j} q^{jl} r^{h-j}
   \nonumber \\
      & = & \sum_{j=0}^h (-1)^{h-j} d_j q^{jl} \label{mhexpand}
\end{eqnarray}
which shows that $m^h$ is a polynomial in $q^l$ with coefficients of alternating signs. Now there are exactly $\lfloor h/2\rfloor$
negative signs in this expansion. All coefficients in~(\ref{mhexpand}) are bounded in modulus by
$$0 < d_j \leq (2r)^h\leq (2q^{\alpha l})^h\leq q^{(\alpha l+1)h},$$
and in turn their $q$-ary sum of digits is less than $s_q(d_j) \leq (q-1)(\alpha l+1)h$. Therefore, by~(\ref{splitneg}), we get that for
fixed $\alpha<1/h$ and sufficiently large $l$ we have
\begin{align}\label{hestim}
  s_q(m^h) &\geq \lceil h/2 \rceil (q-1)l-\lceil h/2 \rceil (q-1)(\alpha l+1)h\nonumber\\
  & \geq \frac{h}{2}(q-1)(l(1-\alpha h)-h).
\end{align}
A similar argument can be applied to the other three summands in~(\ref{lowbound}). This yields
\begin{equation}\label{lowbound2}
  s_q(t_m(q^k)^h)\geq 4h(q-1)(l(1-\alpha h)-h).
\end{equation}
Therefore, for each sufficiently large $l$ we can find $k$ with
\begin{equation}\label{thm3estimk}
  \vert s_q(t_m(q^k)^h)-s_q(t_m(q^k))\vert \leq \frac{q-1}{2}
\end{equation}
provided that $k \geq (h+1)l$ and
$$
  (q-1)k+4(q-1)l \leq 4h(q-1)(l(1-\alpha h)-h).
$$
Note that these conditions allow to successively increase $k$ (see~(\ref{crx})) in order that $s_q(t_m(q^k))$ and
$s_q(t_m(q^k)^h)$ differ by at most $(q-1)/2$. For sufficiently large $l$ these two conditions translate into
\begin{equation}\label{alphacond}
  (h+1)l\leq 4l(h(1-\alpha h)-1)-4h^2.
\end{equation}
Take $\alpha=1/(5h^2)<1/h$. It is then a direct calculation to verify that~(\ref{alphacond}) is true for all $h$ and sufficiently large $l$.
 Summing up, we have obtained that for sufficiently large $l$ we can find
$\gg q^{\alpha l}$ values $r$ where we in turn can provide a value $k$ with~(\ref{thm3estimk}). In addition, each triple
$(l,r,k)$ gives rise to a different value of $t_m(q^k)$. We thus have~(\ref{thm3estim}).

Now consider the case of a general polynomial $p(x)=a_h
x^h+a_{h-1} x^{h-1} + \dots + a_0 \in \Z[x]$. There exist positive
integers $s_1$ and $s_2$, both only depending on the polynomial
$p(x)$ such that $$p(q^{s_1}x+q^{s_2}+1)=a'_h x^h+a'_{h-1} x^{h-1}
+ \dots + a'_0$$ has only positive coefficients. With the notation of~(\ref{cdefin}) we obtain
\begin{align}\label{8terms}
p(q^{s_1} t_m(x)+q^{s_2}+1)&=\sum_{i=0}^3 a'_h c_{4h-i,h}(m) \; x^{4h-i}\\
&+\sum_{i=4}^7 \left(a'_h c_{4h-i,h}(m)+a'_{h-1} c_{4h-i,h-1}(m)\right)
x^{4h-i}\nonumber\\
&+\mbox{smaller powers.}\nonumber
\end{align}
First suppose $h\geq 4$. By choosing $s_1$ sufficiently large
(this choice again only depends on $p(x)$) we get that the
coefficients of $x^j$ in $p(q^{s_1} t_m(x)+q^{s_2}+1)$ with
$4h-7\leq j\leq 4h$ are polynomials in $m$ of degree $h$ since we can avoid unwanted cancellation
for these coefficients. The coefficients of these terms (as polynomials in $m$) are alternating in sign, since for
$h\geq 4$ and $i=0,1,\dots, 2h-1$ we have
\begin{align}\label{i4h-i}
c_{i,h}(m)=c_{4h-i,h}(m)=\sum_{j=h-\lfloor i/2\rfloor}^h
d_{j,i,h}m^j
\end{align}
where $d_{j,i,h} d_{j+1,i,h}<0$ for all $j$ with $h-\lfloor
i/2\rfloor\leq j< h$. Setting $m=q^l-r$ we therefore can choose $s_1$, $s_2$ in the
way that $a'_h c_{4h-i,h}(m)+a'_{h-1} c_{4h-i,h-1}(m)$ as a polynomial in $q^l$ has $\lceil
h/2 \rceil$ negative coefficients for each $=0,1,\ldots,2h-1$. Now, for $q^{s_2}+1<q^{s_1}$,
we get by~(\ref{crx}) that
$$s_q(q^{s_1}t_m(q^k)+q^{s_2}+1)\leq (q-1)k +4(q-1)l+2.$$
In~(\ref{8terms}) we have therefore found eight summands sharing the property of the eight summands in the monomial case
(see~(\ref{lowbound})). From this we proceed as as in the case of monomials to get the statement.

It remains to deal with the cases of general quadratic and cubic
polynomials, where we cannot directly resort to~(\ref{i4h-i}) (note that $8>(2h-1)+1$ for $h=2, 3$).
We instead do a more direct calculation. Let $h=\deg
p=2$ which is the case of quadratic polynomials. By suitably
shifting the argument $x\mapsto q^{s_1}x+q^{s_2}+1$ we can arrange
for a polynomial $p(q^{s_1}x+q^{s_2}+1)=a'_2 x^2+a'_1x+a'_0$ with
$a'_2, a'_1, a'_0>0$ and $2a'_2>a'_1$. Each coefficient of $x^i$
in $p(q^{s_1}t_m(x)+q^{s_2}+1)$, $0\leq i\leq 8$, is a function of
$m$ and of $a'_2, a'_1$ and $a'_0$. In a similar way as before (here we use $9$ summands
instead of the $8$ in the case of $h\geq 4$)
we obtain for sufficiently large $l$,
$$s_q(p(q^{s_1}t_m(q^k)+q^{s_2}+1))> 8 (q-1)l\geq 4h(q-1)l.$$
Now we can choose $k$ suitably to get the assertion. Finally,
for a cubic polynomial, we are able to achieve
$p(q^{s_1}x+q^{s_2}+1)=a'_3x^3+a'_2 x^2+a'_1x+a'_0$
with $a'_3, a'_2, a'_1, a'_0>0$ and $3a'_3>a'_2$. Then, each
coefficient of $x^i$ in $p(q^{s_1}t_m(x)+q^{s_2}+1)$, $0\leq i\leq 12$,
is a function of $m$ and $a'_3, a'_2, a'_1, a'_0$, and thus
we get for sufficiently large $l$,
$$s_q(p(q^{s_1}t_m(q^k)+q^{s_2}+1))> 12 (q-1)l\geq 4h (q-1)l.$$
By choosing $k$ suitably, we obtain the result.
This completes the proof of Theorem~\ref{mtheo3}.\hfill\qed

\medskip

\textit{Proof of Theorem~\ref{mtheo3+}:} We apply the method of proof of
Theorem~\ref{mtheo3} to the special case $q = 2$ and $p(n) = n^2$. Instead of
using the rather crude bounds, we here use exact values to get our result. To begin with,
we observe that the largest coefficient (as $m\to \infty$) of $t_m(x)^2$ is the coefficient of $x^4$, namely $4m^2+1$.
Therefore we get noninterfering terms when $2^k \geq 4m^2+1$. A sufficient
condition for this is $2^k\geq 4 \cdot 2^{2l}=2^{2l+2}$, or equivalently,
\begin{equation}\label{2kllow}
  k\geq 2l+2.
\end{equation}
On the other hand, the coefficients of $x^8$ and $x^7$ (resp. $x^1$ and $x^0$) in $t_m(x)^2$ are
$m^2$ and $2m^2$ which have the same binary sum of digits. Now assume
$\alpha<1/2$ and $l>l_0(\alpha)$ be sufficiently large.
We then use Proposition~\ref{propsplit} and set $m=2^l-r$ with $1\leq r\leq \lfloor 2^{\alpha l}\rfloor$ to obtain
\begin{align}\label{2expans}
  s_2(t_m(2^k)^2) &\geq 4 s_2(m^2)+s_2(4m^2+1)\\
  &=5 s_2\left((2^{l-1}-r )2^{l+1}+r^2\right)+1\nonumber\\
  &\geq 5 s_2(2^{l-1}-r)\nonumber\\
  &= 5\left((l-1) - s_2(r-1)\right)\nonumber\\
  &\geq 5(l-1) -5 \alpha l\nonumber\\
  &\geq (2+\varepsilon) l\nonumber
\end{align}
for any $0<\varepsilon<1/2$. This means that for any $\alpha<1/2$ we have  $\gg q^{\alpha l}$ values $r$ where we in turn can provide a value $k$ satisfying (\ref{thm3estimk}) which is due to
\[ 2 l+2 \leq k \leq (2+\varepsilon)l. \]
This yields
    \[ t_m(q^k) \leq  2q^{4 k + l} \leq 2q^{4 (2+\varepsilon)l + l}
       \leq q^{(9+5\varepsilon) l }. \]
Hence, letting $N = q^{(9+5\varepsilon) l }$ we note that we have
      \[
    \gg q^{\alpha l} = \left(N^{\frac{1}{(9+5\varepsilon) l}}\right)^{\alpha l} = N^{\alpha/(9+5\varepsilon)}
    \geq N^{1/19} \]
solutions to (\ref{thm3estimk}). This finishes the proof.\hfill\qed

\section{Proof of Theorem~\ref{mtheo4}}\label{sec:mtheo4}

The proof that there is only a finite number of odd $n$ such that
$s_2(n^2) = s_2(n) \leq 8$ is a strictly computational one. We
discuss how our algorithm works.

Consider $$n = \sum_{i=1}^k 2^{r_i} =2^{r_1} + 2^{r_2} + \dots + 2^{r_k}$$ with
$0 = r_1 < r_2 < r_3 < \dots < r_k$. We have
\begin{eqnarray*}
n^2 & = & \sum_{i=1}^k \sum_{j=1}^{k} 2^{r_i + r_j}=
\sum_{i=1}^k 2^{2 r_i}+\sum_{i=1}^k \sum_{j=i+1}^{k} 2^{r_i + r_j+1}.
\end{eqnarray*}
We therefore need to examine the exponents
$$\{2r_1,\; 2 r_2,\; \dots,\; 2 r_k, \;r_1+r_2+1, \;r_1 + r_3 + 1,\; \dots,\;
r_{k-1} + r_k + 1\}$$ and the possible iterations between these exponents by carry propagation.

Clearly, $2r_1$ is the strict minimum within these exponents.
Other relationships between exponents are not as clear.
For example, $r_1 + r_3 + 1$ could be less than, equal to, or greater than $2 r_2$
depending on the choices of $r_3$ and $r_2$. Each of these cases must be examined in turn.
Numerous of these inequalities have implications for the order of other exponents in the
binary expansion of $n^2$. So, once we make an assumption in our case by case analysis, this
might rule out future possibilities. For example, if we assume that $2 r_3 < 1 + r_1 + r_4$,
then we have as a consequence that $1 + r_2 + r_3 < 1 + r_1 + r_4$ (by noticing that
$r_2 < r_3$). In the case of equality we ``group'' terms. For example, if we assumed that
$2 r_3 = 1 + r_2 + r_4$, then we could, first, replace all occurrences of $r_2$ with $2 r_3 - 1 - r_4$, and
second replace $2^{2 r_3} + 2^{1 + r_2 + r_4}$ by $2^{2 r_3 + 1}$.

Our algorithm occasionally finds a solution set with fractional or negative values for $r_i$,
which is a contradiction. On the other hand, it is possible for the algorithm to find a solution,
even if all of the exponents cannot be explicitly determined. This would happen if there is an infinite family of $n$ with
$s_2(n^2) = s_2(n) = k$ with some nice structure, (as is the case for $k = 12$, see~(\ref{casek12})).
The algorithm will detect, and report this. We used the method for $k$ up to $8$. For each of these values,
there was only a finite number of $n$, and all of them are enumerated in Tables~1 and~2.

\begin{table}
\begin{tabular}{llll}
\hline
Base 10 & Base 2 & Base 10 & Base 2 \\
\hline&&&\\
\multicolumn{2}{c }{$\mathbf{s_2(n) = s_2(n^2) = 1}$} &
\multicolumn{2}{ c}{$\mathbf{s_2(n) = s_2(n^2) = 7}$} \\
1 & 1  &   127 & 1111111 \\
  &    &   319 & 100111111 \\
\multicolumn{2}{c}{$\mathbf{s_2(n) = s_2(n^2) = 2}$} &  351 & 101011111 \\
3 & 11 &    375 & 101110111 \\
  &    &    379 & 101111011 \\
\multicolumn{2}{c}{$\mathbf{s_2(n) = s_2(n^2) = 3}$} &  445 & 110111101 \\
7 & 111 &   575 & 1000111111 \\
  &     &   637 & 1001111101 \\
\multicolumn{2}{c}{$\mathbf{s_2(n) = s_2(n^2) = 4}$} &  815 & 1100101111 \\
15 & 1111 &  1087 & 10000111111 \\
   &      &  1149 & 10001111101 \\
\multicolumn{2}{c}{$\mathbf{s_2(n) = s_2(n^2) = 5}$} &  1255 & 10011100111 \\
31 &  11111 &  1815 & 11100010111 \\
79 & 1001111 & 2159 & 100001101111 \\
91 & 1011011 & 2173 & 100001111101 \\
157 & 10011101 & 2297 & 100011111001 \\
279 & 100010111 & 2921 & 101101101001 \\
   &       &   4191 & 1000001011111 \\
\multicolumn{2}{c}{$\mathbf{s_2(n) = s_2(n^2) = 6}$} &  4207 & 1000001101111 \\
63 & 111111 &  4345 & 1000011111001 \\
159 & 10011111 &  6477 & 1100101001101 \\
183 & 10110111 &  8689 & 10000111110001 \\
187 & 10111011 &  10837 & 10101001010101 \\
287 & 100011111 &  16701 & 100000100111101 \\
317 & 100111101 &  18321 & 100011110010001 \\
365 & 101101101 &  33839 & 1000010000101111 \\
573 & 1000111101 &        & \\
1071 & 10000101111 &       & \\
1145 & 10001111001 &       & \\
1449 & 10110101001 &           & \\
4253 & 1000010011101 &           & \\
4375 & 1000100010111 &           & \\
4803 & 1001011000011 &           & \\
&&&\\\hline
\end{tabular}
\caption{Odd $n$ such that $s_2(n^2) = s_2(n) \leq 7$.}
\label{tab:k=1..7}
\end{table}

\begin{table}
\begin{tabular}{llll}
\hline
Base 10 & Base 2 & Base 10 & Base 2 \\
\hline&&&\\
\multicolumn{2}{c }{$\mathbf{s_2(n) = s_2(n^2) = 8}$}   &
\multicolumn{2}{c }{$\mathbf{s_2(n) = s_2(n^2) = 8}$ \bf (cont)}
\\
255 &  11111111   &  5811 &  1011010110011 \\
639 &  1001111111 &  5865 &  1011011101001 \\
703 &  1010111111 &  5911 &  1011100010111 \\
735 &  1011011111 &  5971 &  1011101010011 \\
751 &  1011101111 &  6479 &  1100101001111 \\
759 &  1011110111 &  6557 &  1100110011101 \\
763 &  1011111011 &  8415 &  10000011011111 \\
893 &  1101111101 &  8445 &  10000011111101 \\
975 &  1111001111 &  8697 &  10000111111001 \\
1151 &  10001111111 &  10035 &  10011100110011 \\
1215 &  10010111111 &  11591 &  10110101000111 \\
1277 &  10011111101 &  11597 &  10110101001101 \\
1455 &  10110101111 &  13233 &  11001110110001 \\
1463 &  10110110111 &  13591 &  11010100010111 \\
1495 &  10111010111 &  16575 &  100000010111111 \\
1501 &  10111011101 &  16607 &  100000011011111 \\
1599 &  11000111111 &  16889 &  100000111111001 \\
1647 &  11001101111 &  17393 &  100001111110001 \\
1661 &  11001111101 &  22807 &  101100100010111 \\
2175 &  100001111111 &  23441 &  101101110010001 \\
2301 &  100011111101 &  23575 &  101110000010111 \\
2685 &  101001111101 &  25907 &  110010100110011 \\
2919 &  101101100111 &  33777 &  1000001111110001 \\
2987 &  101110101011 &  46377 &  1011010100101001 \\
3259 &  110010111011 &  46881 &  1011011100100001 \\
4223 &  1000001111111 &  51811 &  1100101001100011 \\
4349 &  1000011111101 &  66173 &  10000001001111101 \\
4601 &  1000111111001 &  67553 &  10000011111100001 \\
4911 &  1001100101111 &  69521 &  10000111110010001 \\
5069 &  1001111001101 &  133231 &  100000100001101111 \\
5231 &  1010001101111 &  227393 &  110111100001000001 \\
5799 &  1011010100111 &  266335 &  1000001000001011111 \\
&&&\\\hline
\end{tabular}
\caption{Odd $n$ such that $s_2(n^2) = s(n) = 8$.}
\label{tab:k=8}
\end{table}

\section{Proof of Theorem~\ref{mtheo5}}\label{sec:mtheo5}

For the proof of Theorem \ref{mtheo5}, we first state some
auxiliary results. Denote by $(n)_2$ the binary representation of
$n$, and $1^{(k)}$ a block of $k$ binary $1$. We begin with the
following key observation.

\begin{prop}\label{prop:infinite}
If there exists $u$ and $v$ such that $s_2(u) + s_2(v) = s_2(u^2) + s_2(u v) + s_2(v^2) = k$,
then for $i$ sufficiently large, the numbers of the form $(n)_2 = u0^i v$ satisfy
$s_2(n^2)=s_2(n)= k$.
\end{prop}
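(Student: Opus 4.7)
The plan is straightforward: write $n = u \cdot 2^j + v$ with $j = i + \ell(v)$, where $\ell(v)$ denotes the binary length of $v$, and then apply the splitting formula~(\ref{splitpos}) of Proposition~\ref{propsplit} once to $n$ and twice to the expansion
$$n^2 = u^2 \cdot 2^{2j} + (uv) \cdot 2^{j+1} + v^2.$$

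First I would verify that for $i \geq 1$ we have $v < 2^{\ell(v)} \leq 2^j$, so that~(\ref{splitpos}) immediately yields $s_2(n) = s_2(u) + s_2(v)$. Next I would choose $i$ large enough that the three summands in the expansion of $n^2$ occupy disjoint blocks of bits. A sufficient pair of conditions is $v^2 < 2^{j+1}$ together with $(uv)\cdot 2^{j+1} + v^2 < 2^{2j}$; both reduce to inequalities of the form $i \geq i_0(u,v)$ that are linear in $\ell(u)$ and $\ell(v)$, and are therefore satisfied for all sufficiently large $i$. Once they hold, two applications of~(\ref{splitpos})---the outer one splitting off $u^2 \cdot 2^{2j}$, the inner one splitting $(uv)\cdot 2^{j+1}$ from $v^2$---give
$$s_2(n^2) = s_2(u^2) + s_2\bigl((uv)\cdot 2^{j+1} + v^2\bigr) = s_2(u^2) + s_2(uv) + s_2(v^2).$$

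Combining the two identities with the hypothesis $s_2(u) + s_2(v) = s_2(u^2) + s_2(uv) + s_2(v^2) = k$ yields $s_2(n) = s_2(n^2) = k$. There is no genuine obstacle: the only point that needs care is the bookkeeping of binary lengths to guarantee the noninterference of the three blocks in $n^2$, which is a routine comparison. In particular, no extra hypothesis on $u$ or $v$ (such as parity, or $u$ starting with a $1$-bit) is required; the notation $u 0^i v$ merely encodes that $v$ is padded with leading zeros to its natural binary length $\ell(v)$, and the identity $n = u \cdot 2^{i+\ell(v)} + v$ is unconditional.
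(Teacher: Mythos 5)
Your proposal is correct and is exactly the argument the paper intends: the paper's own proof is the one-line remark that the claim ``follows at once'' from the splitting formula~(\ref{splitpos}), and your write-up simply makes explicit the decomposition $n=u\cdot 2^j+v$, $n^2=u^2 2^{2j}+(uv)2^{j+1}+v^2$ and the noninterference conditions that the authors leave implicit.
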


\begin{proof}
  This follows at once from Proposition~\ref{propsplit}, relation~(\ref{splitpos}).
\end{proof}

We use Proposition \ref{prop:infinite} to prove the following lemma.
\begin{lem}\label{n1n2}
Let $(u)_2 = 1^{(k_1)}01^{(n_1)}$ and $(v)_2 = 1^{(k_2)}01^{(n_2)}$. Assume
that $n_1\geq k_1+2$, $n_2\geq k_2+2$ and $n_1\ge n_2$. Then
$$ s_2(u^2) = n_1 \ \ {\rm and} \ \ s_2(v^2) = n_2,$$
and
$$
s_2(uv) =\begin{cases}
k_1+2 & {\rm if} \ n_2=k_1+1, n_1=n_2+k_2+1\\
n_2+1 & {\rm if} \ n_2>k_1+1, n_1=n_2+k_2+1\\
n_1+1 & {\rm if} \ k_1=k_2, n_1>n_2.
\end{cases}
$$
\end{lem}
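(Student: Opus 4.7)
The plan is to write $u$ and $v$ in the closed algebraic form
$$u = 2^{n_1+k_1+1} - 2^{n_1} - 1, \qquad v = 2^{n_2+k_2+1} - 2^{n_2} - 1,$$
which is forced by the binary patterns $1^{(k_i)}01^{(n_i)}$. Each claimed digit sum will then be read off by expanding the relevant square or product algebraically, regrouping the resulting short signed sum of powers of $2$ into pairwise disjoint blocks of consecutive binary $1$s (plus isolated bits), and invoking Proposition~\ref{propsplit} so that the total length of those blocks equals the sum of digits.

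For $s_2(u^2)=n_1$, I expand
$$u^2 = 2^{2n_1+2k_1+2} - 2^{2n_1+k_1+2} + 2^{2n_1} - 2^{n_1+k_1+2} + 2^{n_1+1} + 1.$$
Using the identity $2^a-2^b = \sum_{j=b}^{a-1}2^j$ together with $n_1 \geq k_1+2$, this rewrites as a block of $k_1$ consecutive $1$s at exponents $[2n_1+k_1+2,\,2n_1+2k_1+1]$, a block of $n_1-k_1-2$ consecutive $1$s at $[n_1+k_1+2,\,2n_1-1]$, and two isolated $1$-bits at positions $n_1+1$ and $0$. A direct inspection shows these four ranges are pairwise disjoint, so the digit sum equals $k_1+(n_1-k_1-2)+2=n_1$; the boundary case $n_1=k_1+2$, where the middle block is empty, is handled by hand. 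The identical argument yields $s_2(v^2)=n_2$.

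For $s_2(uv)$, I expand the product into the nine-term signed sum
\begin{align*}
uv &= 2^{n_1+n_2+k_1+k_2+2} - 2^{n_1+n_2+k_1+1} - 2^{n_1+k_1+1} - 2^{n_1+n_2+k_2+1} \\
&\quad + 2^{n_1+n_2} + 2^{n_1} - 2^{n_2+k_2+1} + 2^{n_2} + 1
\end{align*}
and address the three cases in turn. In Case~1 the identities $n_2=k_1+1$ and $n_1=n_2+k_2+1$ make two pairs of exponents coincide, giving two exact cancellations, after which a single borrow inside the top block leaves $k_1+2$ isolated $1$-bits. In Case~2 only the pair $+2^{n_1}-2^{n_2+k_2+1}$ cancels, and under $n_2>k_1+1$ the remaining seven terms regroup into two noninterfering blocks of consecutive $1$s together with two isolated bits summing to $n_2+1$ ones. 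In Case~3 the equality $k_1=k_2$ merges $-2^{n_1+n_2+k_1+1}-2^{n_1+n_2+k_2+1}$ into a single $-2^{n_1+n_2+k_1+2}$, and the eight remaining signed terms regroup into blocks whose lengths sum to $n_1+1$, once the block disjointness is verified.

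The main technical obstacle I anticipate is the disjointness verification in Case~3: the blocks there lie closest together on the bit scale, and the inequalities $n_1>n_2$, $n_2\geq k_1+2$ and $k_1=k_2$ must be used carefully to rule out overlap between adjacent blocks. The $s_2(u^2)$ and $s_2(v^2)$ assertions are essentially routine given the setup, and the cancellation bookkeeping in Cases~1 and~2 is mechanical once the exponents are matched; the delicate part of the proof lies in this final range-comparison step.
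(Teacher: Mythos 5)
Your overall strategy is exactly the paper's: write $u=2^{n_1+k_1+1}-2^{n_1}-1$ and $v=2^{n_2+k_2+1}-2^{n_2}-1$, expand the square and the product into short signed sums of powers of $2$, regroup into blocks of consecutive ones, and read off the digit sum via Proposition~\ref{propsplit}. Your computation of $s_2(u^2)$ coincides with the paper's, and your Cases~1 and~2 follow the paper's decomposition $uv=1+2^{n_2}+W+\cdots$ with $W=2^{n_1+n_2+k_1+k_2+2}-2^{n_1+n_2+k_1+1}-2^{n_1+n_2+k_2+1}$ and $s_2(W)=k_1$.

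The gap sits precisely at the step you defer. The block disjointness in Case~3 cannot be ``verified carefully'' from the stated hypotheses, because it is false in general: after merging, the middle block occupies bit positions $n_2+k_1+1,\dots,n_1+n_2-1$ (with the bit at $n_1+k_1+1$ removed), and the isolated bit $2^{n_1}$ lands inside that block as soon as $n_1\geq n_2+k_1+1$, so a carry destroys the count. Concretely, $k_1=k_2=2$, $n_2=4$, $n_1=10$ satisfies every hypothesis of the lemma and of Case~3, yet $u=7167$, $v=111$ give $uv=795537=(11000010001110010001)_2$ with $s_2(uv)=8\neq n_1+1=11$. So Case~3 requires the additional hypothesis $n_1\leq n_2+k_1$; similarly, Case~2 silently uses $k_1\geq k_2$, since in general $s_2(W)=\max(k_1,k_2)$. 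In fairness, the paper's own proof contains the identical omission (it simply asserts noninterference, after a ``we may assume $k_1\geq k_2$'' that is not a genuine loss of generality given the asymmetric case conditions), and the lemma is only ever invoked in the proof of Theorem~\ref{mtheo5} with parameters for which these extra conditions hold --- Case~3 is in fact never used there. But as a self-contained proof of the lemma as stated, your Case~3 cannot be completed; you would need to add the missing constraint rather than expect the listed inequalities to rule out the overlap.
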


\begin{proof}
 Let $(U)_2=1^{(k)}01^{(n)}$ with $n\ge k+2$. Then $U=2^n-1+2^{n+1}(2^k-1)$ and
we calculate
\begin{align*}
U^2&=2^{2n}-2^{n+1}+1+2^{n+2}(2^{n+k}-2^n-2^k+1)+2^{2n+2}(2^{2k}-2^{k+1}+1)\\
&=1+2^{n+1}+2^{2n}+2^{n+k+2}(1+2+2^2+\dots +2^{n+k-1})-2^{2n+k+2}\\
&=1+2^{n+1}+2^{n+k+2}+\dots +2^{2n-1}+2^{2n+k+2}+2^{2n+k+3}+\dots +2^{2n+2k+1}.
\end{align*}
Hence $s_2(U^2)=n$ and therefore $s_2(u^2)=n_1$ and $s_2(v^2)=n_2$.

Now, consider $s_2(uv)$. We have
\begin{align*}
uv=&1+2^{n_1}+2^{n_2}+2^{n_1+n_2}-2^{n_1+k_1+1}-2^{n_2+k_2+1}-2^{n_1+n_2+k_1+1}-\\
&2^{n_1+n_2+k_2+1}+2^{n_1+n_2+k_1+k_2+2}.
\end{align*}
We may assume that $k_1\ge k_2$. Then
\begin{align*}
W&:=2^{n_1+n_2+k_1+k_2+2}-2^{n_1+n_2+k_2+1}-2^{n_1+n_2+k_1+1}\\
&=2^{n_1+n_2+k_2+1}(1+2+\dots +2^{k_1 - k_2 - 1} + 2^{k_1 - k_2 + 1} + \dots + 2^{k_1})
\end{align*}
has $s_2(W)=k_1$. We distinguish three cases to conclude:

\begin{itemize}
\item[(1)] Let $n_1=n_2+k_2+1$ and $n_2=k_1+1$. Then $uv=1+2^{n_2}+W$ and hence
$s_2(uv)=k_1+2$.
\item[(2)]  Let $n_1=n_2+k_2+1$ and $n_2>k_1+1$. Then
$uv=1+2^{n_2}+W+2^{n_1+k_1+1}(2^{n_2-k_1-1}-1)$ and hence
$s_2(uv)=2+k_1+n_2-k_1-1=n_2+1$.
\item[(3)]  Let $k_1=k_2=k$ and $n_1>n_2$. Then
$uv=1+2^{n_2}+2^{n_1}+W+2^{n_2+k+1}(2^{n_1-k-1}-1)-2^{n_1+k+1}$ and hence
$s_2(uv)=3+k+n_1-k-2=n_1+1$.
\end{itemize}
This finishes the proof.
\end{proof}

\begin{proof}[Proof of Theorem \ref{mtheo5}]
Let $n_1, n_2, k_1, k_2$ be positive integers with $n_1\ge k_1+2$, $n_2\ge k_2+2$ and
$u, v$ be as in Lemma \ref{n1n2}. Let $(N)_2=u0^Rv$ be the binary representation of $N$ where
$R\ge n_1+n_2+k_1+k_2$. By Proposition~\ref{prop:infinite} and Lemma \ref{n1n2} we have for any
$R\ge n_1+n_2+k_1+k_2$,
\begin{align*}
  s_2(N)&=s_2(u)+s_2(v)=n_1+n_2+k_1+k_2,\\
  s_2(N^2)&=s_2(u^2)+s_2(v^2)+s_2(uv)=n_1+n_2+s_2(uv).
\end{align*}
Let $k\ge 2$. Taking $k_1 = k_2 = k$ and $n_1=n_2=2k$, we find from Lemma \ref{n1n2}
and $2k\ge k+2$ that $$s_2(N^2)=s_2(N)=6k$$ implying there are infinite families
of $n$ such that $s_2(n)=s_2(n^2)=s$ for $s$ of the form $6k$ with $k\ge 2$.

Let $k_2=2, k_1\ge 3, n_2=k_1+2$ and $n_1=n_2+k_2+1=k_1+4$. Then $s_2(uv)=n_2+1$ by Lemma \ref{n1n2}
implying $s_2(N^2)=s_2(N)=3(k_1+2)+1$. Hence there are infinite families of $n$ such that
$s_2(n)=s_2(n^2)=s$ for $s$ of the form $3k+1$ with $k\ge 5$.

Let $k_1\ge k_2\ge 3$ and $n_2=k_1+k_2-1, n_1=n_2+k_2+1$. Then $s_2(uv)=n_2+1=k_1+k_2$ from
Lemma \ref{n1n2} implying
$s_2(N^2)=s_2(N)=3k_1+4k_2-1$. Let $k_2=3$. Then $s_2(N^2)=s_2(N)=3(k_1+3)+2$ for $k_1\ge 3$ giving
infinite families of $n$ such that $s_2(n)=s_2(n^2)=s$ for $s$ of the form $3k+2$ with $k\ge 6$.

Let $k_2=4$. Then $s_2(N^2)=s_2(N)=3(k_1+5)$ for $k_1\ge 4$ giving
infinite families of $n$ such that $s_2(n)=s_2(n^2)=s$ for $s$ of the form $3k$ with $k\ge 27$.

Summing up, we have infinite families of $n$ with $s(n^2)=s(n)=s$ for all $s\ge 22$, respectively,
$s\in \{12, 16, 18, 19, 20\}$. For $s\in \{13, 17, 21\}$, we take $(N)_2=u0^Rv$ with
\begin{align*}
s=13: \ &u=10111, \  v=10110111111\\
s=17: \ &u=111011111, \  v=10110111111\\
s=21: \ &u=11110111111, \ v=111101111111.
\end{align*}

This completes the proof of Theorem~\ref{mtheo5}.
\end{proof}

\section{Evidence that $s_2(n^2) = s_2(n) \leq 10$ is finite}
\label{sec:comp2}

All examples of infinite families with $s_2(n^2) = s_2(n) = k$
have the form  given from Lemma~\ref{prop:infinite}. We show that
there do not exists $u$ and $v$ satisfying
Proposition~\ref{prop:infinite}, with $k\in\{9, 10\}$. We
illustrate this method for $k = 8$, as it contains all of the key
ideas without being overly cumbersome. The case of $k = 8$ is
actually proved to be finite by the techniques of Section
\ref{sec:mtheo4}, but this does not detract from this example. The
other two cases are similar.

Assume the contrary, that there exists $u$ and $v$ such that
\[ s_2(u) + s_2(v) = s_2(u^2) + s_2(v^2) + s_2(u v) = 8 \]
We easily see that $s_2(v), s_2(u) \geq 2$. Furthermore, as $s_2(u), s_2(v) \geq 2$,
we see that $s_2(u^2), s_2(v^2) \geq 2$. Also, we have that $s_2(u v) \geq 2$.
Therefore, we have $2 \leq s_2(u^2), s_2(v^2) \leq k - 4$. Lastly, we see that one
of $u$ or $v$ must be ``deficient'', that $s_2(u^2)<s_2(u)$ or $s_2(v^2)<s_2(v)$.

Assume without loss of generality that $s_2(u^2) < s_2(u)$. Given the restrictions,
we have that $2 \leq s_2(u) \leq 6$. Using the same algorithm as in Section~\ref{sec:mtheo4},
we can find all $u$ such that $2 \leq s_2(u) \leq 6$ and $s_2(u^2) < s_2(u)$, $s_2(u^2) \leq 4$.
These are the first three entries of Table~3.

\begin{table}
\begin{tabular}{llll}
\hline
Base 10 & Base 2 &  &  \\
\hline&&&\\
\multicolumn{2}{c }{$u$} & $s_2(u)$ & $s_2(u^2)$ \\
23& 10111& 4& 3 \\ 
47& 101111& 5& 4 \\
111& 1101111& 6& 4 \\
95& 1011111& 6& 5 \\ 
5793& 1011010100001& 6& 5 \\
223& 11011111& 7& 5 \\ 
727& 1011010111& 7& 5 \\
191& 10111111& 7& 6 \\ 
367& 101101111& 7& 6 \\ 
415& 110011111& 7& 6 \\ 
1451& 10110101011& 7& 6 \\ 
46341& 1011010100000101& 7& 6 \\ 
479& 111011111& 8& 5 \\ 
447& 110111111& 8& 6 \\ 
887& 1101110111& 8& 6 \\
&&&\\\hline
\end{tabular}
\caption{$s_2(u) \leq 8$, $s_2(u^2) < s_2(u)$ and $s_2(u^2) \leq 6$.}
\end{table}

Therefore, it suffices to show that there do not exists $v$ for $u = 23, 47$ or $111$
with $s_2(u) + s_2(v) = s_2(u^2) + s_2(v^2) + s_2(uv) = 8$.

\begin{enumerate}
\item[(1)] Let $u = 23 = 10111$. Given that $s_2(uv) \geq 2$ we have that $s_2(v) = 4$ and $s_2(v^2) \leq 3$.
The only possible solution by Table~3 is $v = 23 = 10111$, but $s_2(uv) = 3$, a contradiction.

\item[(2)] Let $u = 47 = 101111$. Given that $s_2(uv) \geq 2$ we have that $s_2(v) = 3$ and $s_2(v^2) \leq 2$.
There are no solutions by Table~3 for this, a contradiction.

\item[(3)] Let $u = 111 = 11101111$. Given that $s_2(uv) \geq 2$ we have that $s_2(v) = 2$ and $s_2(v^2) \leq 2$.
There is one possible solution to this by Table~3, namely $v = 3 = 11$. But then $s_2(u v) = 5$, a contradiction.
\end{enumerate}

A similar, but more elaborate analysis can be done for $k = 9$ and $k = 10$
using the additional information in Table~3. Here we look at $2 \leq s_2(u) \leq 7$,
$s_2(u^2) < s_2(u)$ and $s_2(u^2) \leq 5$.

\section{Proof of Theorem~\ref{genqsquares}}
\label{sec:genqsquares}

The proof uses the strategy adopted for the case $q=2$ (see
Section~\ref{sec:mtheo4}). However, in order to handle more
possible digits in the case of $q\geq 3$, the analysis is
much more delicate. In the proof we will make
frequent use of the fact~(\ref{wellknown})
and of the splitting formulae of Proposition~\ref{propsplit},
which will apply if we have noninterfering terms at our disposal.

To begin with, the condition~(\ref{knec}) is necessary,
since~(\ref{Dioeq}) implies
$$s_q(n^2)-s_q(n)\equiv n^2-n \equiv k^2 -k \equiv 0\quad \bmod (q-1).$$
For the construction of an infinite family, we first prove a crucial lemma.
\begin{lem}
  Let $$u=((q-1)^k \; 0 \; (q-1)^n e)_q$$ with $k\geq 2$, $n\geq k+2$ and $0\leq e\leq q-2$.
  Then
  $$s_q(u)=(q-1)(n+k)+e$$
  and
  $$s_q(u^2)=(q-1)(n+1)+f(q,e)$$
  where
  \begin{equation}\label{fdefini}
    f(q,e)=s_q((q-e)^2)+s_q(2(q-1)(q-e))-s_q(2(q-e)-1).
  \end{equation}
\end{lem}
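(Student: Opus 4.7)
Part 1 is immediate: by the definition of $u$, its base-$q$ digits are $k$ copies of $(q-1)$, a $0$, $n$ copies of $(q-1)$, and $e$, so $s_q(u) = k(q-1) + n(q-1) + e = (n+k)(q-1) + e$.

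For Part 2, the plan is to set $m = q - e \in \{2, \ldots, q\}$, so that $u = q^{n+k+2} - C$ with $C = (q-1)q^{n+1} + m$. Squaring and expanding $C^2 = (q-1)^2 q^{2n+2} + 2m(q-1) q^{n+1} + m^2$ decomposes $u^2 = X + Y + Z + W$, with
\[ X = q^{n+k+2}(q^{n+k+2} - 2C),\ \ Y = (q-1)^2 q^{2n+2},\ \ Z = 2m(q-1) q^{n+1},\ \ W = m^2. \]

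I would then analyze each piece using Proposition~\ref{propsplit}. Since $m^2 \leq q^2$ and $2m(q-1) < 2q^2$, the pieces $W$ and $Z$ occupy digit positions in $\{0,1,2\}$ and $\{n+1, n+2, n+3\}$ respectively, contributing $s_q(m^2)$ and $s_q(2m(q-1))$. Since $(q-1)^2 = (q-2)q + 1$, $Y$ has digit $1$ at position $2n+2$ and $q-2$ at $2n+3$. For $X$, applying~(\ref{splitneg}) to $q^{n+k+2} - 2C$ together with~(\ref{splitpos}) for the non-interfering decomposition $2C - 1 = 2(q-1)q^{n+1} + (2m-1)$ gives $s_q(2C-1) = s_q(2(q-1)) + s_q(2m-1) = (q-1) + s_q(2m-1)$, hence $s_q(X) = (q-1)(n+k+1) - s_q(2m-1)$. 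A direct unpacking of the digits of $q^{n+k+2} - 2C$ further reveals that all digits of $X$ at positions $n+k+4$ through $2n+k+2$ equal $q-1$, with digit $1$ at $2n+k+3$ and $q-2$ at $2n+k+4$ immediately above.

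The main obstacle will be the overlap between $X$ and $Y$: since $n \geq k+2$, both non-zero positions of $Y$ lie inside the $(q-1)$-plateau of $X$. Adding $Y$ triggers $k+1$ consecutive carries at positions $2n+2, 2n+3, \ldots, 2n+k+2$, all absorbed by the digit $1$ at $2n+k+3$ (which becomes $2$). Since each carry lowers the digit sum by $q-1$,
\[ s_q(X+Y) = s_q(X) + s_q(Y) - (k+1)(q-1) = (q-1)(n+1) - s_q(2m-1). \]
Finally, $X+Y$, $Z$, $W$ have pairwise disjoint digit supports (the lowest non-zero position of $X+Y$ is $\geq n+k+2$, separated from the top of $Z$ at $n+3$ by $k \geq 2$, and the top of $W$ at $2$ sits below $n+1$ since $n \geq 2$), so by~(\ref{splitpos}),
\[ s_q(u^2) = s_q(X+Y) + s_q(Z) + s_q(W) = (q-1)(n+1) + s_q(m^2) + s_q(2m(q-1)) - s_q(2m-1). \]
Substituting $m = q-e$ yields exactly $(q-1)(n+1) + f(q,e)$.
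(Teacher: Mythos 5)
Your proof is correct, and it rests on exactly the same algebraic identity as the paper's: both expand $u^2$ into the six terms $q^{2n+2k+4}-2(q-1)q^{2n+k+3}+(q-1)^2q^{2n+2}-2(q-e)q^{n+k+2}+2(q-1)(q-e)q^{n+1}+(q-e)^2$ and then invoke Proposition~\ref{propsplit}. The only real difference is the grouping. The paper pairs each negative term with the positive term directly above it, namely $q^{2n+2k+4}-2(q-1)q^{2n+k+3}$ and $(q-1)^2q^{2n+2}-2(q-e)q^{n+k+2}$, which makes all four resulting blocks mutually noninterfering, so~(\ref{splitpos}) and~(\ref{splitneg}) give the digit sum with no further work (the leftover $s_q((q-1)^2-1)-s_q(2(q-1)-1)$ cancels since both equal $q-2$). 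You instead keep $Y=(q-1)^2q^{2n+2}$ separate and absorb both negative terms into $X=q^{n+k+2}(q^{n+k+2}-2C)$, which forces $Y$ to sit inside the $(q-1)$-plateau of $X$; you then resolve the overlap by an explicit carry count of $k+1$ carries, each costing $q-1$. I checked the digit description of $X$ and the carry bookkeeping, and they are right (note only that the gap between the top of $Z$ at position $n+3$ and the bottom of $X+Y$ at $n+k+2$ is $k-1\geq 1$, not $k$, which is all you need). So your route is valid but slightly more laborious; the paper's pairing is the cleaner way to package the same computation.
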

\begin{proof}
  Since $u=e+(q^n-1)q+(q^k-1) q^{n+2}$, we get
  \begin{align}
    u^2 &= (q-e)^2+ 2 (q-1)(q-e) q^{n+1}-2(q-e)q^{n+k+2}\nonumber\\
   &\quad +(q-1)^2 q^{2n+2}-2(q-1)q^{2n+k+3}+q^{2n+2k+4}.\label{u2exp}
  \end{align}
   By assumption that $n\geq k+2$ and $n,k\geq 2$, the terms in~(\ref{u2exp}) are noninterfering.
   We therefore get
  \begin{align*}
    s_q(u^2) &= s_q((q-e)^2)+s_q(2(q-1)(q-e))-s_q(2(q-e)-1)+(n-k)(q-1)\\
    &\qquad +s_q((q-1)^2-1)-s_q(2(q-1)-1)+(k+1)(q-1).\\
    & = (n+1)(q-1)+s_q(q^2-2q)-s_q(2q-3)+f(q,e).
  \end{align*}
  The claimed value of $s_q(u^2)$ now follows by observing that $s_q(q^2-2q)=s_q(q-2)=q-2$ and
$s_q(2q-3)=s_q(q+q-3)=1+q-3=q-2$.
\end{proof}

Now  consider
\begin{align*}
  u&=((q-1)^{k_1} \; 0 \; (q-1)^{n_1})_q,\\
  v&=((q-1)^{k_2} \; 0 \; (q-1)^{n_2} e)_q
\end{align*}
where we suppose $k_1, n_1, k_2, n_2 \geq 2$ and $n_1\geq k_1+2$, $n_2 \geq k_2+2$. Since $q\nmid n$ we further suppose that $e\neq 0$. We want to
construct an infinite family of solutions to~(\ref{Dioeq}) of the form $n= (u 0^{(i)} v)$, where $i$ is a sufficiently large integer, such that terms will
be noninterfering. Our task is to find an admissible set of parameters
$k_1, n_1, k_2, n_2$ such that for sufficiently large $n_1+n_2+k_1+k_2$ we have
\begin{align}
  s_q(u)+s_q(v)&=s_q(u^2)+s_q(2uv)+s_q(v^2)\nonumber\\
  &= e+(q-1)(n_1+n_2+k_1+k_2).\label{dioiden}
\end{align}
First it is a straightforward calculation to show that $2uv=w_1+w_2$ with
\begin{equation}\label{w1def}
  w_1 = 2q^{n_1+n_2+k_1+k_2+3} - 2(q-1)q^{n_1+n_2+k_1+2}- 2(q-1)q^{n_1+n_2+k_2+2}
\end{equation}
and
\begin{align}\label{w2def}
  w_2 &= 2(q-1)^2 q^{n_1+n_2+1}-2 (q-e)q^{n_1+k_1+1}-2q^{n_2+k_2+2}\nonumber\\
  &\quad +2(q-1)(q-e)q^{n_1}+ 2 (q-1)q^{n_2+1}+2(q-e).
\end{align}
Note that $w_1$ and $w_2$ are noninterfering because of $k_2\geq 2$. Now, set
\begin{equation}\label{crucialchoix}
  k_1=n_2\geq k_2+2,\qquad n_1=2k_2-\alpha,
\end{equation}
where we will later suitably choose $\alpha=\alpha(q,e)$ only depending on $q$ and $e$.
Then terms in~(\ref{w1def}) are again noninterfering and we get
\begin{align*}
  s_q(w_1)&=s_q(2q^{k_1+1}-2(q-1)q^{k_1-k_2}-2(q-1))\\
  & = s_q(2q^{k_2+1}-2q+1)+(q-1)(k_1-k_2)-s_q(2q-3)\\
  & = 1+k_2(q-1)+(q-1)(k_1-k_2)-(q-2)\\
  & =(k_1-1) (q-1)+2.
\end{align*}
Next, by~(\ref{crucialchoix}), we find that
\begin{align}\label{w2defdot}
  w_2 &= 2 q^{k_1+2k_2-\alpha+1} ((q-1)^2-(q-e))-2 q^{k_1+k_2+2}\nonumber\\
  &\quad +2(q-1)(q-e)q^{2 k_2-\alpha}+ 2 (q-1)q^{k_1+1}+2(q-e).
\end{align}
In order to have terms noninterfering in~(\ref{w2defdot}), we impose the following inequalities on the parameters,
\begin{align}
  2&\leq k_1+1,\label{ineq1}\\
  2&\leq (2k_2-\alpha)-(k_1+1),\label{ineq2}\\
  3&\leq (k_1+k_2+2)-(2k_2-\alpha)=k_1-k_2+2+\alpha,\label{ineq3}\\
  1&\leq (k_1+2k_2-\alpha+1)-(k_1+k_2+2)=k_2-\alpha-1.\label{ineq4}
\end{align}
Then we get
$$s(w_2)=(k_2-\alpha-1)(q-1)+g(q,e)$$
where
\begin{align}
  g(q,e)&=s_q(2(q-e))+s_q(2(q-1))+s_q(2(q-1)(q-e))\nonumber\\
  &\qquad +s_q(2(q-1)^2-(q-e)-1)-1.\label{gdefini}
\end{align}
Summing up, we have
\begin{align*}
  &s_q(u^2)+s_q(2uv)+s_q(v^2)\\
&\quad=(q-1)(n_1+1)+f(q,e)+(q-1)(n_2+1)+(k_1-1)(q-1)\\
  &\quad\qquad +2+(k_2-\alpha-1)(q-1)+g(q,e)\\
  &\quad=(q-1)(2 k_1+3 k_2-2\alpha)+f(q,e)+g(q,e)+2.
\end{align*}
Combining with~(\ref{dioiden}) and~(\ref{crucialchoix}) we therefore have
\begin{equation}\label{egalite}
  (q-1)(2 k_1+3 k_2-2\alpha)+f(q,e)+g(q,e)+2=(q-1)(2 k_1+3 k_2-\alpha)+e
\end{equation}
and
$$\alpha(q-1)=f(q,e)+g(q,e)-e+2.$$
Rule~(\ref{wellknown}) applied to~(\ref{fdefini})
and~(\ref{gdefini}) shows that the right hand side is indeed
divisible by $q-1$ since $e^2-e \equiv 0$ mod $(q-1)$ by
assumption. Furthermore, we have by a crude estimation (using
also~(\ref{wellknown})) that
\begin{equation}\label{alphaestim}
  0\leq \alpha\leq 15.
\end{equation}
Suppose $k_2\geq 17$. Then~(\ref{ineq1}) and~(\ref{ineq4}) are satisfied. Rewriting~(\ref{ineq2}) and~(\ref{ineq3}) gives
\begin{equation}\label{k1ineq}
  1+k_2-\alpha\leq k_1\leq 2k_2-\alpha-1.
\end{equation}
Note that $k_1\geq k_2+2$ is more restrictive than the first inequality in~(\ref{k1ineq}). On the other hand, since $k_2\geq 2$, the interval given for $k_1$ in~(\ref{k1ineq}) has at least
$(2\cdot 17-\alpha-1)-(1+17-\alpha)+1=16$ terms. Therefore, $2k_1+3k_2$ hits all integers $\geq 2 (1+(k_2+1)-\alpha)+3(k_2+1)$ for $k_2\geq 17$.
Thus, we find from~(\ref{egalite}) that
all values
\begin{align*}
  (q-1)(2 k_1+3 k_2-\alpha)+e &\geq (q-1)(2 \cdot (19-0)+3\cdot 18)+(q-1)\\
  & = 94 (q-1)
\end{align*}
can be achieved. This completes the proof of Theorem~\ref{genqsquares}.

\begin{thebibliography}{9}

\bibitem{DT06}
C. Dartyge, G. Tenenbaum, Congruences de sommes de chiffres de valeurs polynomiales,
\textit{Bull. London Math. Soc.} \textbf{38} (2006), no. 1, 61--69.

\bibitem{DR05}
M. Drmota, J. Rivat, The sum-of-digits function of squares, \textit{J. London Math. Soc. (2)} \textbf{72} (2005), no. 2, 273--292.

\bibitem{HLS10}
K. G. Hare, S. Laishram, T. Stoll, Stolarsky's conjecture and the
sum of digits of polynomial values, submitted, arxiv.org.

\bibitem{Li97}
B. Lindstr\"om, On the binary digits of a power, \textit{J. Number Theory} \textbf{65} (1997), 321--324.

\bibitem{MR09-1}
C. Mauduit,  J. Rivat, Sur un probl\'eme de Gelfond: la somme des chiffres des nombres premiers, \textit{Annals of Mathematics}, to appear.

\bibitem{MR09-2}
C. Mauduit,  J. Rivat, La somme des chiffres des carr\'es, \textit{Acta Mathematica}, \textbf{203} (2009), 107--148. .

\bibitem{Me05}
G. Melfi, On simultaneous binary expansions of $n$ and $n^2$, \textit{J. Number Theory} \textbf{111} (2005), no. 2, 248--256.

\bibitem{St78}
K. B. Stolarsky, The binary digits of a power, \textit{Proc. Amer. Math. Soc.} \textbf{71} (1978), 1--5.

\end{thebibliography}
\end{document}